\newtheorem{theorem}{Theorem}[section]
\newtheorem{proposition}[theorem]{ Proposition}
\newtheorem{lemma}[theorem]{ Lemma}
\newtheorem{definition}[theorem]{Definition}
\theoremstyle{remark}
\newtheorem*{example}{\it Example}
\def \1{\mathbb {1}}
\def \RM{\mathbb {R}}
\def \NM{\mathbb{N}}
\def \ZM{\mathbb{Z}}
\def \CM{\mathbb{C}}
\def \QM{\mathbb{Q}}
 \def \Vol {{\rm Vol}}
\def \Spec {{\rm Spec\,}}
\def \Aut {{\rm Aut\,}}
\def \p {{\rm exp\,}}
\def \d{\partial}
\def\a{\alpha}
\def\l{\lambda}
\def\p{\varphi}
\def \s{\sigma}
\def \to{\longrightarrow} 
\def \w{\wedge}
\def \< {{\thengle }}
\def \> {{\rangle }}
\def \( {\left( }
\def \) {\right) }
\newcommand{\Ct}{{\mathcal C}}
\newcommand{\Ft}{{\mathcal F}}
\newcommand{\Lt}{{\mathcal L}}
\renewcommand{\mod}{{\rm  mod\,}}
\title[THE HERMAN CONJECTURE]{THE HERMAN CONJECTURE}
\author{  Mauricio  Garay}
\address{ Institut für Mathematik\\
FB 08 - Physik, Mathematik und Informatik\\
Johannes Gutenberg-Universität Mainz\\
Staudinger Weg 9\\
55128 Mainz.}
\begin{document}
\begin{abstract}  We show that KAM versal deformation theory answers positively Herman's invariant tori conjecture.
\end{abstract}
\maketitle
\section{Introduction}
Two types of motions are usually observed in Hamiltonian systems: chaotic motions satisfying the ergodic hypothesis and quasi-periodic ones which stay on a confined region. The KAM theorem provides a criterion for a system to be  of the second type on a positive measure subset~\cite{Arnold_KAM,Kolmogorov_KAM,Moser_KAM}.

 In the nineties, Herman asked for the existence of invariant tori in more general situations than that of the KAM theorem. In 1998, during his ICM lecture, among other things, he made the following remarkable conjecture  for discrete time hamiltonian systems~\cite{Herman_ICM}:\\
  
{\em    In the neighbourhood  of a diophantine elliptic fixed point, a real analytic symplectomorphism has a positive measure set of invariant tori.}\\

According to his students, Herman  also  asked whether the following two variants for continuous time hamiltonian dynamics hold:\\

{\em    In the neighbourhood of a torus carrying a quasi-periodic motion with diophantine frequency,  a real analytic hamiltonian has a positive measure set of invariant tori.}\\

{\em   In the  neighbourhood of an elliptic critical point with diophantine frequency,  a real analytic  hamiltonian has a positive measure set of invariant tori.}\\  

If these conjectures turned out to be true then they will give strong restrictions for ergodicity. The purpose of this paper is to show that these are direct consequences of  KAM versal deformation theory which I developped in~\cite{Abstract_KAM,arithmetic,Lagrange_KAM}. Therefore, we assume the reader to be acquainted with these papers. 

 To state our theorem, let us first recall the definition of density, in the measure theoretical sense. 
 
  For $\a \in \RM^n$, we denote by $B(\a,r)$ the ball centred at $\a$ with radius~$r$. The {\em density} of a measurable subset $K \subset \RM^n$ at a point $\a$ is the limit~(if it exists):
$$\lim_{r \to 0} \frac{\Vol(K \cap B(\a,r))}{\Vol(B(\a,r))}.$$

 Recall that a vector $\a \in \RM^n$ satisfies {\em Bruno arithmetical} condition if
  $$\sum_{k \geq 0}{\frac{ \log a_k}{2^k}}>-\infty  $$
 where
 $$a_k=\min \{ (j,\a) : \| j \| \leq 2^k,\ j \in \ZM^n \} $$
 and $(-,-)$ denotes the euclidean scalar product~\cite{Brjuno}.
  We will prove the
 \begin{theorem}
 \label{T::Herman}
 Let $H:(\RM^{2n},0) \to (\RM,0)$ be an analytic function germ having an elliptic critical point at the origin with frequency $\a$. If $\a$ satisfies Bruno arithmetical condition then for any representative of the germ $H$, the invariant tori form a set of density one at the origin (and in fact at any of its point near the origin).
\end{theorem}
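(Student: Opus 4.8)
The plan is to deduce Theorem~\ref{T::Herman} from the KAM versal deformation machinery of~\cite{Abstract_KAM,arithmetic,Lagrange_KAM} by a local normal form argument near the elliptic fixed point. First I would pass to Birkhoff-type symplectic coordinates $(x,y)\in(\RM^{2n},0)$ in which the quadratic part of $H$ is $\sum_i \a_i(x_i^2+y_i^2)/2$, and introduce action-angle-type variables $I_i=(x_i^2+y_i^2)/2\ge 0$, $\theta_i\in\TM^n$, valid away from the coordinate hyperplanes. In these coordinates $H$ becomes a function $h(I,\theta)$ defined on a neighbourhood of $I=0$ in the positive octant, with $h(I,\theta)=(\a,I)+O(\|I\|^{3/2})$ after the angular averaging is controlled. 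The point is that on any annular region $\{r/2\le\|I\|\le r\}$, the Hamiltonian $h$ is a real-analytic perturbation of the integrable system $(\a,I)$, and the frequency map $I\mapsto \nabla_I h$ is a small analytic deformation of the constant map $I\mapsto\a$; hence the hypothesis that $\a$ is a Bruno vector puts us exactly in the situation covered by the abstract KAM versal deformation theorem, which produces invariant tori for a set of actions whose relative measure in the annulus tends to $1$ as the size of the perturbation (here $O(r^{1/2})$) tends to $0$.

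The key steps, in order, would be: (1) establish the symplectic normal form and the action-angle change of variables, carefully tracking the domains of analyticity and the widths of the complex strips, since these degenerate as $\|I\|\to 0$ — this forces one to rescale; (2) perform the rescaling $I=r\hat I$, $\hat I$ in a fixed annulus $\{1/2\le\|\hat I\|\le 1\}$, under which $h$ becomes $r\big((\a,\hat I)+ \sqrt r\, \hat p(\hat I,\theta; r)\big)$ with $\hat p$ uniformly bounded and analytic on a fixed complex neighbourhood, so that the effective perturbation parameter is $\e(r)=\sqrt r\to 0$; (3) apply the KAM versal deformation theorem from~\cite{Abstract_KAM,Lagrange_KAM} to this rescaled family, using that $\a$ Bruno satisfies the arithmetic hypothesis required there (via~\cite{arithmetic,Brjuno}), to obtain, for each small $r$, a Cantor family of invariant Lagrangian tori in the annulus whose complement has measure $\le C(\e(r))^{\mu}$ for some positive exponent $\mu$ and constant $C$ uniform in $r$; (4) undo the rescaling and sum the dyadic annular contributions $\|I\|\in[2^{-k-1}r_0,2^{-k}r_0]$ to estimate $\Vol(\text{non-tori}\cap B(0,\rho))$; since on the $k$-th annulus the bad set has relative measure $O(2^{-k\mu/2})\to 0$, a Borel–Cantelli / averaging argument gives density one at the origin. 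Finally (5), to get density one at nearby points $\a_0\ne 0$ near the origin, observe that a full-density point of a measurable set whose bad part has relative measure tending to $0$ along shrinking balls is in fact a density-one point of \emph{every} sufficiently small ball it meets within that regime — more precisely, the invariant tori through a fixed small annulus foliate a full-measure subset of an open neighbourhood, so Lebesgue density of that subnneighbourhood is $1$ at each of its interior points, and one checks the bad set is nowhere "thick".

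The main obstacle I expect is step (1)–(2): the collapse of the action-angle coordinates at $I=0$ means the perturbation, while small in a rescaled sense, lives on complex strips whose width shrinks like $\sqrt r$ in the angle variables and whose analyticity in $\hat I$ near the boundary of the octant is delicate. One must verify that the abstract KAM theorem of~\cite{Abstract_KAM} is robust enough to run with perturbation size $\e$ and analyticity width $w$ both going to zero, provided $\e$ beats a suitable power of $w$ — this is precisely the kind of quantitative statement those papers are designed to provide, which is why the reduction works, but making the bookkeeping honest (especially the dependence of the constants on $n$, on $\a$ through the Bruno sum, and on the $C^0$-size of $H$ on the chosen representative) is the technical heart. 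The density-one conclusion then follows formally, as the relative measure of the bad set on the $k$-th dyadic annulus is summably small; the "and at any nearby point" clause is an easy consequence of the fact that the good set contains an open-dense-in-measure union of Lagrangian tori accumulating on the origin.
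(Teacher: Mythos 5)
Your reduction breaks down at step (3), and the gap is exactly the point of Herman's conjecture. After your rescaling, the unperturbed Hamiltonian on the annulus is the \emph{isochronous} linear system $(\a,I)$: its frequency map is the constant map $I\mapsto\a$, hence totally degenerate. The classical KAM measure estimate you invoke (``complement of relative measure $\le C\e^{\mu}$, uniformly in $r$'') requires a nondegeneracy (twist, or at least R\"ussmann) condition on the frequency map of the unperturbed or normalized system; Theorem~\ref{T::Herman} assumes no such condition, and it can genuinely fail (the Birkhoff invariants may vanish identically, or the formal frequency map may have image in a proper subspace). Without twist, the perturbed frequencies are not transverse to the resonant zones and no measure of surviving tori can be extracted from a perturbative smallness statement alone --- if it could, the conjecture would have been a routine corollary of classical KAM, since in the nondegenerate case density one at a Diophantine elliptic point is standard. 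Your step (4) dyadic summation is fine \emph{conditional} on step (3), so the whole argument hinges on a statement you have not proved and which is false as stated in the degenerate case. You also misread what the versal deformation theorem of \cite{Abstract_KAM,Lagrange_KAM} supplies: it is not an annulus measure estimate but a parametric normal form over a Whitney-type ring of functions on an arithmetic set.

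The paper's route is entirely different and is designed precisely to absorb the degeneracy. It never passes to action-angle coordinates (so there is no collapse at $I=0$ and no rescaling): it works with the complex coordinates $p_iq_i$ and the ideal generated by the $p_iq_i-\l_i$. One adds counterterm parameters $\sum_i t_i p_iq_i$ (Martinet's trick), applies the parametric KAM normal form (Theorem~\ref{T::stable}) over $C^\infty_\a(a)$, then uses Lemma~\ref{L::fondamental} and Lemma~\ref{L::restriction} plus Whitney extension to show the resulting frequency map $g$ can be taken with values in the \emph{formal frequency space} $\Ft(H)$ of the Birkhoff normal form; the invariant tori are then parametrised by the real part of $g^{-1}(\Ct(a))$, and density one at the origin comes from the Arithmetic Density Theorem of \cite{arithmetic}, with $a_n$ chosen from the Bruno condition on $\a$. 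Degeneracy is turned into an advantage there: the smaller $\Ft(H)$ is, the less the frequency moves and the easier the arithmetic condition is to meet on the fibre. To repair your proposal you would have to supply exactly this mechanism (a counterterm/parameter elimination argument compatible with possible total degeneracy), not a twist-based measure estimate on annuli.
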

We will concentrate on the formulation for critical points, the cases of periodic orbits or neighbourhoods of KAM tori are similar. Moreover, it will be clear that the real structure has no real influence on the problem: the KAM tori are just the real part of complex lagrangian invariant manifolds. 
 
\section{Formal normal forms}
The relation between KAM versal deformation theory and the Hermann conjecture can already be seen at the level of formal power series.
Therefore, we first recall basic perturbation theory which goes back to the ${\rm XIX^{th}}$ century and then extend it to the relative case, following
Grothendieck's philosophy which considers that a family of varieties is a scheme relative to a given base.

So, we consider the graduation by the polynomial degree in the algebra of formal power series
 $$\CM[[q,p]]:=\CM[[q_1,\dots,q_n,p_1,\dots,p_n]].$$ 
 Note that the graduation of an algebra always extends to the module of its derivations: {\em a derivation is homogeneous of degree $k$ if it maps $Gr^i (A)$ to $Gr^{i+k}(A)$.} For instance, the derivations $\d_{q_1}$, $\d_{p_i}$ are homegeneous of degree $-1$, meaning simply that the derivative decreases the degree of a polynomial by one.

We write $f=g+o(l) $ if $f-g$ contains only terms of degree higher that $l$.  
\begin{proposition} 
\label{P::Delaunay}
Let $H \in \CM[[q,p]]  $ be of the form
$$H=\sum_{i=1}^n \a_i p_iq_i+o(2) $$
 If the $\a_i$'s are linearly $\QM$-independent then, for any $k>0$, there exists a symplectomorphism
 $\p_k:(\CM^{2n},0) \to (\CM^{2n},0) $
 and a polynomial $P_k \in \CM[X_1,\dots,X_n] $
 such that
 $$H \circ \p_k(q,p)=P_k(q_1p_1,\dots,q_np_n)+o(k)$$
 Moreover, the polynomial $P_k$ does not depend on the choice of $\p_k$. 
 \end{proposition}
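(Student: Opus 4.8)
\emph{Plan of proof.} I would recognise Proposition~\ref{P::Delaunay} as the classical Birkhoff--Delaunay normal form and prove it by the homological method, degree by degree; the arithmetic of $\a$ enters only through the operator $D:=\{H_2,\cdot\}$ on $\CM[[q,p]]$, where $H_2=\sum_{i=1}^n\a_ip_iq_i$. Put $I_i:=q_ip_i$. Then $D$ is a derivation homogeneous of degree $0$, and on monomials $D(q^ap^b)=(\a,b-a)\,q^ap^b$ (up to an overall sign). Hence $\QM$-linear independence of the $\a_i$ says exactly that $(\a,b-a)=0$ if and only if $a=b$; consequently $\Ker D=\CM[[I_1,\dots,I_n]]$, and the space $\Pt_m$ of homogeneous polynomials of degree $m$ admits a $D$-invariant splitting $\Pt_m=(\CM[I]\cap\Pt_m)\oplus W_m$, with $W_m$ spanned by the non-resonant monomials ($a\ne b$), $D$ vanishing on the first summand and restricting to an isomorphism of the second. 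In particular $\{\cdot,H_2\}$ maps $\Pt_m$ onto $W_m$, so a homological equation $\{V,H_2\}=g$ with $g\in W_m$ has a solution $V\in\Pt_m$. I also record that $\CM[I]$ is a Poisson-commutative subalgebra, since $\{I_i,I_j\}=0$; this is used only for the uniqueness.

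\emph{Existence.} Induct on the order. Since $H_2=\sum_i\a_iI_i\in\CM[I]$, the cases $k\le 2$ are trivial ($\p_k=\id$). Assume one has an analytic symplectomorphism germ $\p$ with $H\circ\p=Q(I)+R_m+o(m)$, where $Q\in\CM[X]$, $Q(I)$ has degree $<m$, and $R_m\in\Pt_m$ with $m\ge 3$. Write $R_m=\Pi R_m+R_m^{\perp}$ with $\Pi R_m\in\CM[I]\cap\Pt_m$, $R_m^{\perp}\in W_m$, and choose $V\in\Pt_m$ with $\{V,H_2\}=-R_m^{\perp}$. As $m\ge 3$, the hamiltonian vector field $X_V$ vanishes to order $m-1\ge 2$ at the origin, so its time-one flow $\psi$ is an analytic symplectomorphism germ tangent to the identity; and since $\{V,\cdot\}$ raises polynomial degree by $m-2\ge 1$, in the Lie series $\psi^{*}f=f+\{V,f\}+\frac{1}{2}\{V,\{V,f\}\}+\cdots$ every correction term has degree $\ge m$, so $\psi^{*}(Q(I)+R_m+o(m))$ agrees with $Q(I)$ up to degree $m-1$ and has degree-$m$ part $R_m+\{V,H_2\}=R_m-R_m^{\perp}=\Pi R_m\in\CM[I]$. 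Thus $H\circ\p\circ\psi=(Q+\Pi R_m)(I)+o(m)$; iterating from $m=3$ up to $m=k$ and composing the $\psi$'s produces $\p_k$ and $P_k$.

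\emph{Uniqueness.} Let $\p_k,\p_k'$ be two such normalisations, with polynomials $P_k,P_k'$; it is enough to prove $P_k(I)=P_k'(I)+o(k)$, which says that $P_k(q_1p_1,\dots,q_np_n)$ is determined modulo $o(k)$ --- the asserted independence. Put $\chi:=\p_k^{-1}\circ\p_k'$ (we may assume $\chi\ne\id$); then $\chi(0)=0$, so $o(k)\circ\chi=o(k)$ and $P_k(I)\circ\chi=P_k'(I)+o(k)$. Comparing degree-$2$ parts, the linear part $L$ of $\chi$ is symplectic with $H_2\circ L\in\CM[I]\cap\Pt_2$; as the $2n$ numbers $\pm\a_i$ are pairwise distinct (again by $\QM$-independence), $L$ conjugates $X_{H_2}$ to a diagonal field, hence carries each coordinate axis to a coordinate axis --- normalising the labelling so that $P_k$ and $P_k'$ both have linear part $\sum_i\a_iX_i$ makes the possible permutation trivial --- so $L$ acts on each symplectic plane by $(q_i,p_i)\mapsto(\l_iq_i,\l_i^{-1}p_i)$ and $L^{*}I_i=I_i$. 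Absorbing $L$, we may take $\chi$ tangent to the identity, and then (the usual Lie-transform factorisation) $\chi=\exp(X_{V_{j_0}})\circ\exp(X_{V_{j_0+1}})\circ\cdots$ with $V_j\in\Pt_j$ and $j_0\ge 3$ the least degree with $V_{j_0}\ne 0$. The lowest-degree term of $P_k(I)\circ\chi-P_k(I)$ lies in degree $j_0$ and equals $\{V_{j_0},H_2\}\in W_{j_0}$; if $j_0\le k$ this is also the degree-$j_0$ part of $P_k'(I)-P_k(I)\in\CM[I]$, hence lies in $\CM[I]\cap W_{j_0}=\{0\}$, so $V_{j_0}\in\Ker D=\CM[I]$. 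But then $\{V_{j_0},\cdot\}$ annihilates $\CM[I]$, so $\exp(X_{V_{j_0}})$ fixes every element of $\CM[I]$ and can be deleted from the composition without changing $P_k(I)\circ\chi$; i.e.\ $j_0$ may be raised. Iterating, after finitely many steps either $\chi=\id$ or $j_0>k$, and in both cases $P_k(I)\circ\chi=P_k(I)+o(k)$; therefore $P_k(I)=P_k'(I)+o(k)$. Since $X_i\mapsto q_ip_i$ is injective and doubles degree, this is the same as saying $P_k$ and $P_k'$ agree on all monomials of degree $\le k/2$ in the $X_i$.

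\emph{The main obstacle.} The existence step is routine classical perturbation theory; the substance is in the uniqueness. There the arithmetic hypothesis is genuinely used in just two places: $\Ker D$ is \emph{precisely} $\CM[I]$, so the normal-form space $\CM[I]\cap\Pt_m$ and the image $W_m$ of $D$ are complementary in each degree; and the $\pm\a_i$ are pairwise distinct, so the linear part of any normalising map is forced to respect the coordinate splitting. The ``peeling'' then works only because $\CM[I]=\Ker D$ is moreover Poisson-commutative, so the part of a normalising transformation lying along the normal-form directions acts trivially on the normal form. The single point needing care is the legitimacy of the Lie-transform factorisation and of the degree-by-degree composition manipulations; these are fine because $\{V_j,\cdot\}$ strictly raises degree, so no analytic convergence issue enters here --- the divergence of a genuine normalising transformation is irrelevant, $k$ being finite.
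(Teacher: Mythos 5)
Your existence step is essentially the paper's own argument: the homological equation $\{H_0,f\}=R^{\perp}$, solvable because $\QM$-independence of the $\a_i$ makes the eigenvalues $(\a,b-a)$ of $\{H_0,\cdot\}$ nonzero exactly off the resonant monomials, followed by the Lie series of the exponential; the only cosmetic differences are that you remove one degree per step where the paper removes two at once (degrees $k+1$ and $k+2$), and that you use time-one flows of $X_V$ where the paper works directly with the automorphism $e^{\{-,f\}}$ of $\CM[[q,p]]$. Where you genuinely add content is the final clause: the paper's proof stops after the existence induction and never argues that $P_k$ is independent of the choice of $\p_k$, whereas you prove it, by factoring $\chi=\p_k^{-1}\circ\p_k'$ into a linear part that fixes each $q_ip_i$ and a formal product of exponentials $\exp(X_{V_j})$, and then peeling off generators degree by degree; the two points doing the work are that $\{H_0,\cdot\}$ maps each $\Pt_m$ onto the non-resonant summand $W_m$ with kernel $\CM[I]\cap\Pt_m$, and that $\Ker \{H_0,\cdot\}=\CM[[I]]$ is Poisson-commutative, so resonant generators act trivially on the normal form and can be discarded. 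Your reading of the uniqueness assertion is also the accurate one: $P_k$ is only determined modulo terms of order $>k$ (equivalently, monomials of degree $>k/2$ in the $X_i$), and only after normalising the labelling and signs of the symplectic planes so that the quadratic part of the normal form is $\sum_i\a_iX_i$; without that normalisation a permutation of the planes (or a swap $(q_i,p_i)\mapsto(p_i,-q_i)$) changes $P_k$ by the corresponding change of variables, a caveat the paper leaves implicit. So your proposal is correct, matches the paper on existence, and supplies a proof of the uniqueness clause that the paper asserts without argument.
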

 \begin{proof}
Write
  $$H=H_0+R_3+o(3),\ H_0:=\sum_{i=1}^n \a_i p_iq_i.$$
  As  the $\a_i$ are linearly $\QM$ independent, there exists a homogeneous polynomial $f_3$ of degree $3$ such that:
  $$\{ H_0, f_3\} =R_3.$$ 
  The symplectic automorphism
  $$e^{\{ -, f_3\}} :\CM[[q,p]] \to \CM[[q,p]],\ F \mapsto F+\{ F, f_3\}+\frac{1}{2!}\{ \{ F, f_3\}, f_3\}+\dots $$
 maps $H$ to
  $$ e^{\{ H, f_3\}}=\sum_{i=1}^n \a_i p_iq_i+o(3).$$
  This proves the proposition for $k=3$.
  
 Assume the proposition is proved up to some odd number $k \geq 3$. We may assume that
 $$H=P_k(q_1p_1,\dots,q_np_n)+R_{k+1}+R_{k+2}+o(k+2)$$
 where $R_j$ is a homogeneous polynomial of degree $j$. There exist  homogeneous polynomials $f_{k+1},f_{k+2}$ of respective degrees $k+1, k+2$ and a homogeneous polynomial $B_k \in \CM[X_1,\dots,X_n]$ of degre $(k+1)/2$  such that:
  $$\{ H, f_{k+1}\} =R_{k+1}+B_k(p_1q_1,\dots,p_nq_n), \ \{ H, f_{k+2}\} =R_{k+2} .$$ 
We have:
$$e^{\{ H, f_{k+1}+f_{k+2}\}}=H+(P_{k}+B_k)(p_1q_1,\dots,p_nq_n)+o(k+2).$$This proves the proposition.
 \end{proof}
 This proposition is of course standard. Poincaré  attributed this result  to Delaunay and Lindstedt, but it is sometimes called the {\em Birkhoff normal form}~\cite{Birkhoff,Poincare_Methodes}.  
 
 We now consider a variant with parameters of the above result.
Consider the algebra of formal power series  $A=\CM[[\l,q,p]]$ with $q=(q_1,\dots,q_n)$, $p=(p_1,\dots,p_n)$,  $\l=(\l_1,\dots,\l_n)$.  We  extend the graduation to $A$ by putting the degree of the $\l_i$'s equal to $2$.   
The bivector $v=\sum_{i=1}^n \d_{q_i} \w \d_{p_i} $ 
induces a $\CM[[\l]]$-linear Poisson structure on $A:=\CM[[\l,q,p]]$:
$$\{ f,g \}:=\sum_{i=1}^n \d_{q_i}f \d_{p_i}g-\d_{p_i}f\d_{q_i} g. $$
 
 Denote by $I \subset \CM[[\l,q,p]]$ the ideal generated by the $p_iq_i-\l_i$'s for $i=1,\dots,n$. If two series are equal modulo the square of the ideal $I$ then they define the same hamiltonian derivation of $\CM[[\l,q,p]]/I$.  
 
The inclusion $\CM[[q,p]] \subset \CM[[\l,q,p]]$ induces an isomorphism of Poisson algebras
$$\CM[[q,p]] \overset{\sim}{\to}  \CM[[\l,q,p]]/I .$$
In particular, any hamiltonian derivation in $\CM[[q,p]]$ can be identified with a hamiltonian derivation in $\CM[[\l,q,p]]/I$. We get the following parametric variant of Proposition \ref{P::Delaunay}:

\begin{proposition}
\label{P::Lagrange}
 Consider an element $H \in \CM[[\l,q,p]] $ 
such that 
$$H(\l,q,p)=\sum_{i=1}^n \a_i p_iq_i+o(2),\ \a_i \in \CM $$
 If the $\a_i$'s are linearly independent over $\QM$ then there exists a Poisson automorphism
 $\p \in \Aut(\CM[[\l,q,p]])$
 and  $\widehat{g_1},\dots,\widehat{g_n} \in \CM[[\l]]$  such that
 $$\p(H)=\sum_{i=1}^n (\a_i+\widehat{g_i}(\l)) p_iq_i\ (\mod I^2 \oplus \CM[[\l]])$$
  where $I \subset \CM[[\l,q,p]]$ is the ideal generated by $p_1q_1-\l_1,\dots,p_nq_n-\l_n$
 \end{proposition}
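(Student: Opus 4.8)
The plan is to lift the normal form statement of Proposition \ref{P::Delaunay} from $\CM[[q,p]]$ to the relative setting over $\CM[[\l]]$, using the Poisson-algebra isomorphism $\CM[[q,p]] \overset{\sim}{\to} \CM[[\l,q,p]]/I$ recalled just before the statement. First I would apply Proposition \ref{P::Delaunay} itself: since the $\a_i$ are $\QM$-independent, for each $k$ there is a symplectomorphism $\p_k$ of $\CM[[q,p]]$ and a polynomial $P_k$ with $H \circ \p_k = P_k(q_1p_1,\dots,q_np_n) + o(k)$, and the $P_k$ are compatible as $k$ grows, so they assemble into a formal power series $\widehat{P} \in \CM[[X_1,\dots,X_n]]$ and the $\p_k$ converge (in the Krull topology) to a formal symplectomorphism $\p$ with $\p(H) = \widehat{P}(q_1p_1,\dots,q_np_n)$. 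Writing $\widehat{P}(X) = \widehat{P}(0) + \sum_i \a_i X_i + (\text{higher order})$ and collecting the higher-order terms, one gets $\widehat{P}(q_1p_1,\dots,q_np_n) = \widehat{P}(0) + \sum_i (\a_i + \widehat{g_i}) p_iq_i$ where each $\widehat{g_i}$ is a power series in the $p_jq_j$ with no constant term — but here is the key move: modulo $I$ the element $p_jq_j$ is congruent to $\l_j$, so $\widehat{g_i}(p_1q_1,\dots,p_nq_n) \equiv \widehat{g_i}(\l) \pmod{I}$, and I would want to conclude $\widehat{g_i}(p_1q_1,\dots) p_iq_i \equiv \widehat{g_i}(\l) p_iq_i \pmod{I^2}$ since $p_iq_i \in I + \CM[[\l]]$ and multiplying a congruence mod $I$ by an element of $I+\CM[[\l]]$ lands in $I^2 + I\CM[[\l]] \subset I^2 \oplus \CM[[\l]]$-type corrections; the constant term $\widehat{P}(0)$ is absorbed in the $\CM[[\l]]$ summand.

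The remaining point is to transport the symplectomorphism $\p$ of $\CM[[q,p]]$ to a Poisson automorphism of $\CM[[\l,q,p]]$ proper, not just of the quotient. For this I would use that $\p$, being a formal symplectomorphism fixing the origin, is an infinite composition (or exponential) of Hamiltonian flows $e^{\{-,f_j\}}$ with $f_j \in \CM[[q,p]]$ homogeneous of degree $j$, exactly as in the proof of Proposition \ref{P::Delaunay}. Each generating function $f_j \in \CM[[q,p]] \subset \CM[[\l,q,p]]$ defines a $\CM[[\l]]$-linear Hamiltonian derivation of $\CM[[\l,q,p]]$ for the Poisson structure $v = \sum_i \d_{q_i} \w \d_{p_i}$, hence an exponentiable automorphism $e^{\{-,f_j\}} \in \Aut(\CM[[\l,q,p]])$ (exponentiability is fine because $\{-,f_j\}$ raises the $\l$-adic-plus-$(q,p)$-degree filtration once $j \geq 3$, so the series converges in the Krull topology). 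Composing these gives the desired $\p \in \Aut(\CM[[\l,q,p]])$, and by construction its reduction mod $I$ is the original symplectomorphism of $\CM[[q,p]] \cong \CM[[\l,q,p]]/I$, so the computation of the previous paragraph applies verbatim modulo $I$ and, after the multiplication-by-$p_iq_i$ refinement, modulo $I^2 \oplus \CM[[\l]]$.

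The main obstacle, and the one deserving care, is the bookkeeping of the ideal: one must verify that the relation $\p(H) = \sum_i (\a_i + \widehat{g_i}(\l)) p_iq_i$ really holds modulo $I^2 \oplus \CM[[\l]]$ and not merely modulo $I$. Concretely, since modulo $I^2$ a Hamiltonian derivation of $\CM[[\l,q,p]]/I$ only depends on the class of its generator, the content of the statement is that $\p(H)$ and $\sum_i(\a_i+\widehat{g_i}(\l))p_iq_i$ induce the same Hamiltonian derivation on $\CM[[\l,q,p]]/I$, which is the object that matters for the dynamics; one checks this by writing $\widehat{P}(q_1p_1,\dots,q_np_n) - \widehat{P}(0) - \sum_i(\a_i+\widehat{g_i}(\l))p_iq_i$ as a $\CM[[\l]]$-combination of products $(p_jq_j - \l_j)(p_iq_i)$ via the obvious telescoping identity $\prod(p_jq_j) - \prod \l_j \in I$ refined degreewise, and observing that $p_iq_i = (p_iq_i - \l_i) + \l_i$, so each such product lies in $I^2 + \l_i I \subset I^2 \oplus \CM[[\l]]$ after separating the purely-$\l$ part. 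Granting this, the proposition follows, and it is exactly the parametric normal form needed to feed into KAM versal deformation theory.
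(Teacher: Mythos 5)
Your overall route is the paper's: reduce to Proposition \ref{P::Delaunay}, assemble the truncated normal forms into $\p(H)=\widehat P(q_1p_1,\dots,q_np_n)$, lift $\p$ to a $\CM[[\l]]$-linear Poisson automorphism of $\CM[[\l,q,p]]$ (the paper glosses this; your exponential argument is fine), and then reduce modulo $I^2\oplus\CM[[\l]]$. But that last step, which is the actual content, has a genuine error. You rely twice on the inclusion $I\cdot\CM[[\l]]\subset I^2\oplus\CM[[\l]]$ (``$I^2+I\CM[[\l]]\subset I^2\oplus\CM[[\l]]$-type corrections'', ``$I^2+\l_i I\subset I^2\oplus\CM[[\l]]$ after separating the purely-$\l$ part''), and this inclusion is false: $\l_1(p_1q_1-\l_1)=\l_1p_1q_1-\l_1^2$ does not lie in $I^2\oplus\CM[[\l]]$, since it vanishes only to first order along $\{p_iq_i=\l_i\}$ while $\CM[[\l]]\cap I=0$. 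Consequently, with your grouping-based choice of $\widehat{g_i}$ the asserted congruence fails. Concretely, for $n=1$ and $\widehat P(X)=\a X+X^2$ your recipe gives $\widehat{g}(\l)=\l$, but $\a pq+(pq)^2-(\a+\l)pq=(pq-\l)^2+\l(pq-\l)\notin I^2\oplus\CM[[\l]]$; the correct coefficient is $\a+2\l$, exactly as in the paper's own example in the frequency-space section, where $H=\a_1p_1q_1+\a_2p_2q_2+p_1^2q_1^2$ yields $\widehat{g_1}(\l)=2\l_1$, not $\l_1$.

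The repair is the paper's computation: do not group monomials, but expand by the first-order Taylor formula centred at $X=\l$, namely $\widehat P(q_1p_1,\dots,q_np_n)=\widehat P(\l)+\sum_{i=1}^n\d_{\l_i}\widehat P(\l)\,(q_ip_i-\l_i)\ \mod I^2$, and take $\a_i+\widehat{g_i}(\l):=\d_{\l_i}\widehat P(\l)$; the remainder is then honestly in $I^2$, and the linear term equals $\sum_i(\a_i+\widehat{g_i}(\l))q_ip_i$ plus an element of $\CM[[\l]]$. (Equivalently: your leftover $I\cdot\CM[[\l]]$ terms are, mod $I^2$, of the form $\sum_i c_i(\l)(p_iq_i-\l_i)$ and must be re-absorbed into the $\widehat{g_i}$; the Taylor formula does this in one stroke.) A secondary point: like the paper's own discussion, your argument treats only $H\in\CM[[q,p]]$, whereas the statement allows $H$ to depend on $\l$; for that case one runs the same Birkhoff induction $\CM[[\l]]$-linearly (the homological operator $\{H_0,-\}$ annihilates exactly the monomials $\l^cq^ap^a$), obtaining $\p(H)=P(\l,q_1p_1,\dots,q_np_n)$, and then applies the same Taylor argument in the last $n$ variables.
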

The relation with Proposition~\ref{P::Delaunay} is straightforward. Take 
$$H=\sum_{i=1}^n \a_i p_iq_i+o(2) \in \CM[[q,p]]$$ 

Proposition \ref{P::Delaunay} implies the existence of a symplectic automorphism $\p$ and a power series $P \in \CM[[X_1,\dots,X_n]]$ such that
$$\p(H)=P(q_1p_1,\dots,q_np_n). $$
Now, embed $\CM[[q,p]]$ in $\CM[[\l,q,p]]$. The Taylor formula at order one gives:
$$P(q_1p_1,\dots,q_np_n)=P(\l_1,\dots,\l_n)+\sum_{i=1}^n  g_i(\l) (q_ip_i-\l_i) \mod I^2   $$
where $g_i=\d_{\l_i} P$.  In particular, as derivation of $\CM[[\l,q,p]]/I$, we have
$$\{ -,P(q_1p_1,\dots,q_np_n) \}=\{ -, \sum_{i=1}^n  g_i(\l) (q_ip_i-\l_i) \}. $$ 
This shows that the parametric normal form is equivalent to the usual one for functions which do not depend on parameters.

The above proposition shows  that the parameter $\l$ is in fact two-fold: it parametrises the frequency of the Hamiltonian and the Lagrangian invariant variety at the same time. In order to distinguish both roles, we introduce a new parameter $t=(t_1,\dots,t_n)$ to parametrise the frequency.

So, we consider now the algebra of formal power series in $4n$ variables $ \CM[[t,\l,q,p]]$ with $t=(t_1,\dots,t_n)$. We assign the degree $0$ to the parameters $t_1,\dots,t_n$ which parametrise the frequency of the Hamiltonian. So that the polynomials
$$\sum_{i=1}^n (\a_i+t_i) p_iq_i,\ p_iq_i-\l_i,\ i=1,\dots,n$$
are both homogeneous of degree $2$.

 Proposition~\ref{P::Delaunay} admits the following variant:
 \begin{proposition}
\label{P::parametric} For any $H \in \CM[[t,\l,q,p]] $ 
such that 
$$H(t,\l,q,p)=\sum_{i=1}^n (\a_i+t_i) p_iq_i+o(2),\ \a_i \in \CM $$
 If the $\a_i$'s are linearly independent over $\QM$ then there exists a Poisson automorphism
 $\p \in \Aut(\CM[[t,\l,q,p]])$ such that
 $$\p(H)=\sum_{i=1}^n (\a_i+t_i) p_iq_i (\mod I^2 \oplus \CM[[t,\l]])$$
 where $I \subset \CM[[t,\l,q,p]]$ is the ideal generated by $p_1q_1-\l_1,\dots,p_nq_n-\l_n$.
 \end{proposition}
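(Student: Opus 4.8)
The plan is to carry out the normalization of Proposition~\ref{P::Delaunay} \emph{relatively}, over the base ring $\CM[[t,\l]]$, and then to absorb the residual frequency shift by a formal change of the parameter $t$ --- the parameter which, as explained above, is there precisely to carry the frequency.

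I would first perform the relative Birkhoff normalization. Put $H_0=\sum_{i=1}^n(\a_i+t_i)p_iq_i$ and eliminate, degree by degree, the homogeneous components of $H-H_0$ of degrees $3,4,5,\dots$. On each homogeneous piece the operator $f\mapsto\{H_0,f\}$ is diagonal in the monomial basis, sending $q^ap^b\l^ct^d$ to $-(a-b,\a+t)\,q^ap^b\l^ct^d$, and $-(a-b,\a+t)=-(a-b,\a)-(a-b,t)\in\CM[[t]]$ is a unit whenever $a\neq b$ because $\QM$-linear independence of the $\a_i$ gives $(a-b,\a)\neq0$. Hence the non-resonant part of each homogeneous component is removed by an exponential $e^{\{-,f\}}$ exactly as before (the divisors now being power series in $t$, the corresponding $f$ has coefficients in $\CM[[t]]$; as $\deg t=0$ this changes nothing). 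The resonant part is a combination of monomials $\big(\prod_i(q_ip_i)^{a_i}\big)\l^ct^d$. Each exponential fixes $t$ and $\l$ (they are Casimirs), so their composition is a Poisson automorphism $\psi\in\Aut(\CM[[t,\l,q,p]])$ fixing $t$ and $\l$, and, collecting the resonant terms,
$$\psi(H)\equiv\sum_{i=1}^n\big(\a_i+t_i+\widehat g_i(t,\l)\big)p_iq_i\ \ (\mod I^2\oplus\CM[[t,\l]])$$
for suitable $\widehat g_i\in\CM[[t,\l]]$; this is the parametric analogue of Proposition~\ref{P::Lagrange}.

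The crucial observation is that $\widehat g_i\in(\l_1,\dots,\l_n)$ --- in particular $\widehat g_i$ has no constant term and no term depending on $t$ alone. Indeed, writing $q_jp_j=(q_jp_j-\l_j)+\l_j$, a resonant monomial $\big(\prod_i(q_ip_i)^{a_i}\big)\l^ct^d$ of degree $2(|a|+|c|)\geq4$ is congruent, modulo $I^2\oplus\CM[[t,\l]]$, to $\sum_j\big(\partial_{\l_j}\l^a\big)\l^ct^d\,(q_jp_j-\l_j)$, whose coefficients are divisible by some $\l_k$ (because $|a|+|c|\geq2$); the only resonant contribution of degree $2$ is $\sum_i t_ip_iq_i$, already part of $H_0$. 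Since $\widehat g_i$ is assembled from the coefficients produced at degrees $\geq3$, it lies in $(\l_1,\dots,\l_n)$. Now the substitution $t_i\mapsto t_i+\widehat g_i(t,\l)$, $\l\mapsto\l$, has Jacobian equal to the identity at the origin (its only non-trivial block is $I+N(\l)$ with $N(\l)$ having entries in $(\l_1,\dots,\l_n)$), so by the formal inverse function theorem it is an automorphism of $\CM[[t,\l]]$; let $t_i\mapsto u_i(t,\l)$ be its inverse. Extend it to a map $\chi$ of $\CM[[t,\l,q,p]]$ fixing $q,p,\l$ and sending $t_i$ to $u_i(t,\l)$: this is an algebra automorphism, and it is Poisson because it fixes $q,p$ and hence commutes with $\partial_{q_i},\partial_{p_i}$. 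Since $\chi(I)=I$ and $\chi(\CM[[t,\l]])=\CM[[t,\l]]$, the map $\chi$ preserves $I^2\oplus\CM[[t,\l]]$, and
$$\chi(\psi(H))\equiv\sum_{i=1}^n\big(\a_i+u_i(t,\l)+\widehat g_i(u(t,\l),\l)\big)p_iq_i=\sum_{i=1}^n(\a_i+t_i)p_iq_i\ \ (\mod I^2\oplus\CM[[t,\l]]),$$
so $\varphi:=\chi\circ\psi$ is the required Poisson automorphism.

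I expect the main obstacle to be the middle step: checking that the resonant terms left by the normalization perturb the frequencies only by corrections divisible by the $\l_i$'s. This is exactly what makes the reparametrization $t_i\mapsto t_i+\widehat g_i$ invertible; its compatibility with the Poisson bracket is then automatic, being a substitution that fixes $q,p,\l$. Everything else is the formal bookkeeping already present in Proposition~\ref{P::Delaunay}.
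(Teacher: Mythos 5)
Your argument is correct, and it is worth noting that the paper itself gives no proof of Proposition~\ref{P::parametric}: it is stated as an evident ``variant'' of Propositions~\ref{P::Delaunay} and~\ref{P::Lagrange}, the exact counterpart being delegated to Martinet's trick together with Theorem~\ref{T::stable} quoted from \cite{Lagrange_KAM}. Your two-step scheme --- (1) relative Birkhoff normalization over $\CM[[t]]$, using that the eigenvalues $-(a-b,\a+t)$ are units of $\CM[[t]]$ when $a\neq b$, followed by (2) absorption of the residual frequency map $\widehat g(t,\l)$ by the substitution $t\mapsto t+\widehat g(t,\l)$ --- is exactly the formal content of that trick, made explicit: the decisive point, which you identify correctly, is that $\widehat g_i\in(\l_1,\dots,\l_n)$, because all surviving resonant terms have degree at least $4$ and hence, after writing $q_jp_j=\l_j+(q_jp_j-\l_j)$, contribute coefficients divisible by some $\l_k$; and no new degree-$2$ terms are created since each exponential $e^{\{-,f_k\}}$, $k\geq3$, only alters degrees $\geq k$ (you use this implicitly; it deserves a sentence). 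Two small imprecisions, neither of which affects validity: the Jacobian of $(t,\l)\mapsto(t+\widehat g(t,\l),\l)$ at the origin is not just $I+N(\l)$ in the $t$-block --- the off-diagonal block $\d_\l\widehat g(0)$ can be nonzero (e.g.\ $\widehat g_1=2\l_1$ in the paper's example) --- but the matrix is still block upper triangular with identity diagonal blocks, hence invertible, so the formal inverse function theorem applies as you say; and since $\deg t=0$, the homogeneous components are $\CM[[t]]$-modules rather than finite-dimensional spaces, so the convergence of the infinite composition should be phrased in the $(\l,q,p)$-adic topology, which is routine. With these remarks your proof is complete and supplies a self-contained justification that the paper leaves implicit.
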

 
 \section{The frequency space}

  In the situation of Proposition~\ref{P::Lagrange}, there exists a unique vector space $\Ft(H)$ generated by the partial derivative of $\widehat{g}$ which contains the image of $\widehat{g}$. This provides a formal variant of non-degeneracy conditions and we will see that formal non-degeneracy implies $C^\infty$-non degeneracy.  
   
 From a scheme-theoretical point of view,we have a map of formal schemes
$$\widehat{g}:\Spec(\CM[[\l]]) \to \Spec(\CM[[t]]) $$
induced by substituting $t_i$ with $\widehat{g_i}(\l)$ .
\begin{definition} The smallest vector space which contains the image of the formal frequency map $\widehat{g} $ is called the frequency space of $H$.
We denote it by  $\Ft(H)$.
\end{definition}
\begin{example} Consider the function 
$$ H(q,p)=\a_1\,p_1q_1+\a_2 p_2q_2+p_1^2q_1^2,\ \a_1,\a_2 \in \CM.$$
We have:
$$H(q,p)=(\a_1+2\l_1)p_1q_1+\a_2 p_2q_2 \mod (I^2 \oplus \CM[\l])$$
where $I$ is the ideal generated by $p_1q_1-\l_1$ and $p_2q_2-\l_2$. The frequency map is defined by
$$\widehat{g_1}(\l)= 2\l_1,\ \widehat{g_2}(\l)=0. $$
Thus $\Ft(H)$ can be canonically identified with the first coordinate axis:
$$\{ (\l_1,\l_2):\l_2=0 \} \subset \CM^2.$$
\end{example}
 Note that the frequency space is invariant under the action of Poisson automorphisms. We sometimes abuse notations and also denote by $\Ft(H)$
 its embedding in $\CM^{3n}$ given by the inclusion $\CM[[\l]] \subset \CM[[\l,q,p]] $.

Although elementary, the following lemma turns out to be essential:
 \begin{lemma}
 \label{L::fondamental} Let  $e_1,\dots,e_n$ a basis of $\CM^n$, fix $k \leq n$ and denote by $V$ the vector space generated by
 $e_1,\dots,e_k \in \CM^n$. Let
 $$g:\Spec(\CM[[\l]]) \to \Spec(\CM[[t]]) $$
 be a formal mapping whose image lies in $V$.
 Put
$$G= \sum_{i=1}^n (\a_i+t_i) (f_i,e_i)+R+o(2^{n+1})  \in \CM[[t,\l,q,p]],\  \deg(R)<2^{n+1}$$
and assume that the frequency space of $G$ restricted to $t=g$ is contained in $V$.
 Any hamiltonian vector field $v $ of degree $2^{n-1}$ such that
 $$v( G)=R+o(2^{n+1})  (\mod I^2)$$
 is tangent to $V $.
 \end{lemma}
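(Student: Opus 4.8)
I would write $v=\{-,f\}$ with $f$ homogeneous of degree $2^{n-1}+2$, and set $G_0:=\sum_{i=1}^{n}(\a_i+t_i)(f_i,e_i)$, the displayed homogeneous term of degree $2$, so that $G=G_0+R+o(2^{n+1})$. Identifying $\CM[[t,\l,q,p]]/I$ with $\CM[[t,q,p]]$ via $\l_i\mapsto p_iq_i$, ``$v$ tangent to $V$'' means that $v$ preserves the ideal $\Jt_V$ generated by the linear forms in $p_1q_1,\dots,p_nq_n$ vanishing on $V$. I would then split $f=f^{\rm res}+f^{\rm nr}$, with $f^{\rm res}$ the sum of the monomials $q^{a}p^{b}$ having $a=b$ (the polynomial part in the $p_iq_i$) and $f^{\rm nr}$ the rest. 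A polynomial in the $p_iq_i$ Poisson-commutes with every generator of $\Jt_V$, so $\{-,f^{\rm res}\}$ is automatically tangent to $V$; hence it is enough to prove that $f^{\rm nr}$ has no monomial $q^{a}p^{b}$ that is \emph{transverse to $V$}, i.e.\ with $q^{a}p^{b}\notin\Jt_V$ and $b-a\notin V$.

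Next I would recover $f^{\rm nr}$ from the data. Since $\{-,f\}$ strictly raises the degree, by $2^{n-1}$, and $G=G_0+R+o(2^{n+1})$, the hypothesis $v(G)=R+o(2^{n+1})\pmod{I^2}$ becomes $\{G_0,f\}+\{R,f\}\equiv R$ and, on substituting this identity into itself,
$$R\ \equiv\ \sum_{m\ge 1}\mathrm{ad}_f^{\,m}(G_0)\pmod{I^{2},\ o(2^{n+1})},\qquad \mathrm{ad}_f(X):=\{X,f\},$$
a finite sum because $\deg R<2^{n+1}=4\cdot 2^{n-1}$. The operator $\{G_0,-\}$ is the $\CM[[t,\l]]$-linear twisted Euler derivation: it multiplies $q^{a}p^{b}$ by the linear form $t\mapsto(\om(t),b-a)$, $\om(t):=\sum_i(\a_i+t_i)e_i$, which vanishes identically exactly on the resonant monomials. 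Hence $\{G_0,-\}$ is injective on $f^{\rm nr}$; comparing degree-$(2^{n-1}+2)$ parts gives $\{G_0,f\}\equiv R_{2^{n-1}+2}\pmod{I^2}$, so $f^{\rm nr}$ is determined by $R_{2^{n-1}+2}$, and $f^{\rm nr}$ has a transverse monomial iff $R_{2^{n-1}+2}$ does.

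Then I would bring in the frequency hypothesis. Set $t=g$. As $\Im g\subseteq V$, the term $G_0|_{t=g}=\sum_i(\a_i+g_i(\l))(f_i,e_i)$ is already in normal form and contributes the $V$-valued map $g$ to the frequency map of $G|_{t=g}$; normalising the remainder $R|_{t=g}$ adds a further frequency $\widehat\rho$, so the frequency map of $G|_{t=g}$ equals $g+\widehat\rho$. Since $\Ft(G|_{t=g})\subseteq V$ and $g$ is $V$-valued, $\widehat\rho$ is $V$-valued as well. Suppose for contradiction that $f^{\rm nr}$, hence $R_{2^{n-1}+2}$, hence $R|_{t=g}$, had a transverse monomial $q^{a}p^{b}$. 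I would show that clearing it during the Birkhoff normalisation of $G|_{t=g}$ necessarily produces, after composing one or two further brackets with the normalising generators, a \emph{resonant} term divisible by some $p_jq_j$ with $j>k$, at a degree still below $2^{n+1}$: a contribution to $\widehat\rho$ pointing out of $V$, contradicting $\widehat\rho\in V$. Here the factor $4$ in $2^{n+1}=4\cdot 2^{n-1}$ is precisely the degree budget letting the bracket with $f$ be composed the two or three times needed to carry a transverse monomial into a resonant term visible below degree $2^{n+1}$; it also absorbs the degenerate case where the scalar $(\om(g),b-a)$ vanishes identically and one must descend to a deeper bracket.

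The step that will need genuine care is this last reduction — ruling out that the resonant terms created while clearing a transverse monomial of $f$ all happen to lie in $V$. My plan for it is to pick the transverse monomial of $f$ extremal for a monomial order giving strictly positive weight to the transverse directions $e_{k+1},\dots,e_n$: for such a monomial the resulting resonant term has a non-zero component along some $e_j$ with $j>k$ that no lower-weight contribution can cancel. Everything else is the routine bookkeeping of the grading and of the reduction modulo $I^{2}$ indicated above.
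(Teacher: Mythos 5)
There is a genuine gap, and it comes on top of a misreading of what the lemma asserts. In the paper, $v$ is a derivation of $\CM[[t,\l,q,p]]$ which is allowed to move the frequency parameters: it is exactly the $\partial_{t_i}$-components of $v$ that absorb the resonant part of $R$ (this is why the paper's proof ends by computing $v(t_i)$), and ``tangent to $V$'' means that these components vanish on $V$ — this is the form in which the lemma is used afterwards, in Lemma~\ref{L::restriction}, to get $u_k(J_k)\subset J_k$ for the ideal $J_k$ of functions of the parameters vanishing on the frequency space. With your reading, $v=\{-,f\}$ is a purely symplectic Hamiltonian field, so $v(t_i)=0$ identically and tangency in the sense needed later is vacuous; moreover $\{G_0,-\}$ annihilates resonant monomials and sends non-resonant ones to non-resonant ones, and the torus-weight decomposition is compatible with $I^2$, so under your reading the hypothesis $v(G)=R+o(2^{n+1})\ (\mod I^2)$ simply forces the resonant part of $R$ to vanish at leading order — i.e.\ you are proving (a version of) a different, weaker statement, one that cannot feed the induction in the proof of Theorem~\ref{T::versal}.

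Even taken on its own terms, your argument is incomplete precisely where the content lies: the claim that a transverse non-resonant monomial of $f$ must, when cleared in the Birkhoff normalisation of $G(t=g(\l),-)$, leave a resonant trace along some $e_j$, $j>k$, below degree $2^{n+1}$, is only announced (``the step that will need genuine care''), and the extremal-monomial plan is a heuristic: the new resonant terms arise from iterated brackets of the type $\{\{G,f\},f\}$, and nothing in the sketch excludes that all of them lie in $V$ or cancel. Compare with the paper's proof, which needs no normalisation iteration at all: since the frequency map of $G(t=g(\l),-)$ takes values in $V$, the coefficients of the resonant monomials $p_iq_i$ with $i>k$ in $R$ must lie in the ideal generated by $t_{k+1},\dots,t_n$; since the higher terms $v^k(G)$, $k>1$, are of order at least $2^{n+1}$, the components $v(t_i)$ are exactly these coefficients, hence vanish on $V$ for $i>k$, which is the asserted tangency.
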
 
 \begin{proof}
 In the expansion
 $$e^{-v}G=G-v(G)+\frac{1}{2!} v(v(G))+\dots $$
 the terms $v^k(G)=v^{k-1}(R)+o(2^{n+1})$ are of order at least $2^{n+1}$ if $k>1$.
 The frequency space of $G(t=g(\l),-)$ is generated by $e_1,\dots,e_k$ thus  $R$ must be of the form
 $$R=\sum_{i=1}^k a_i(\l,t) p_iq_i+\sum_{i>k,j>k} t_j b_{i,j}(\l,t) p_iq_i +\sum_{I \neq J} b_{I,J}p^Iq^J+o(2^{n+1})\ (\mod I^2),$$
 and as $v$ is of degree $2^{n-1}$, we have
 $$v(t_i)=\left\{ \begin{matrix} a_i(\l,t)&{\rm for}& i \leq k  \\ \sum_{j>k} t_j b_{i,j}(\l,t) &{\rm for}& i > k\end{matrix} \right.$$
 This proves the Lemma. 
 \end{proof}

 \section{KAM versal deformations}
 We now give an exact variant of Proposition~\ref{P::Lagrange}. We refer to \cite{Abstract_KAM,Lagrange_KAM} for the formalism of Arnold spaces that we shall now use.
 
 Let $a$ be a decreasing sequence and $\a \in \CM^n$ a vector which belongs to the arithmetic class $ \Ct(a)$.
 Given a map
$$g:(\CM^k,0) \to (\CM^n,0) $$
there is a natural pull-back notion for the Arnold space $C^l_\a(a)$, $l \in \NM$, these are the $C^l$-function defined on
$g^{-1}(\Ct(a)_n)$:
$$  f \in g^{-1}C^l_\a(a) \iff f \circ g \in C^l_\a(a).$$

\begin{theorem} 
\label{T::versal}
Let $a=(a_n)$ be a decreasing positive sequence such that
$$\sum_{n\geq 0} \frac{\log a_n}{2^n}>-\infty. $$ 
Let  $I \subset A$ be the   involutive ideal generated by the $p_iq_i+\l_i$'s and 
$$ H_0=\sum_{i=1}^n \a_i p_iq_i$$
with $(\a_1,\dots,\a_n) \in \Ct(a)$. 
For any $H$ such that $H=H_0+o(3)$, there exists a $C^\infty$-map germ
$$g=(g_1,\dots,g_n):(\CM^k,0) \to (\Ft(H),0) $$ and a Poisson morphism of  
$$\p:  \CM\{\l, q,p \}  \to  \CM\{q,p \} \hat \otimes  g^{-1}C^\infty_\a(a)    $$
such that
$$\p(H)= \sum_{i=1}^n (\a_i+g_i(\l)) p_iq_i (\mod I^2\oplus  g^{-1}C^\infty_\a(a)).$$
Moreover, if $H$ is real analytic for some complex anti-holomorphic involution then $\p$ and $g$ can also be chosen real.
\end{theorem}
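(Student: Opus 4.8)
The plan is to construct the map $g$ and the Poisson morphism $\p$ by a KAM-type iteration, organised along the dyadic scale: at stage $m$ one kills all the remainder terms of degree $<2^{m+1}$ by composing with an automorphism $e^{v_m}$ whose Hamiltonian generator $v_m$ is of degree $2^{m-1}$, and simultaneously absorbs the part of the remainder that cannot be killed into a correction $g^{(m)}$ of the frequency. First I would set up the formal scheme of the iteration exactly as in Proposition~\ref{P::parametric}: introduce the auxiliary frequency parameter $t$, so that at each step we solve a cohomological equation $\{H_0,f\}=R-B(pq)$ with $B$ the obstruction, which by the $\QM$-independence of the $\a_i$ is solvable degree by degree, with small divisors controlled by the arithmetic class $\Ct(a)$ through the hypothesis $\sum \log a_n/2^n>-\infty$. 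The point of working over $\CM\{q,p\}\tensor g^{-1}C^\infty_\a(a)$ rather than over formal power series is that the composition of the $e^{v_m}$ converges in the Arnold-space topology; this is precisely the content of the abstract KAM machinery of \cite{Abstract_KAM,Lagrange_KAM}, so I would quote the relevant convergence statement from there rather than reprove it.

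The key new ingredient, compared with the purely formal Proposition~\ref{P::parametric}, is that the limiting frequency map $g$ lands in $\Ft(H)$ and not merely in $\CM^n$. This is where Lemma~\ref{L::fondamental} enters: at the stage where we have already normalised up to degree $2^{n+1}$, the hypothesis that the frequency space is contained in the coordinate subspace $V$ generated by $e_1,\dots,e_k$ forces every admissible Hamiltonian vector field $v$ of degree $2^{n-1}$ solving the cohomological equation to be tangent to $V$; hence the successive corrections $g^{(m)}$, $m\ge n$, never leave $V$, and the cumulative $g$ has image in $\Ft(H)$ by definition of the latter as the smallest subspace containing the image of the formal frequency map. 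So the structure of the argument is: run the formal normal form of Proposition~\ref{P::Lagrange} to \emph{identify} $\Ft(H)$ and to fix the subspace $V$; then run the convergent iteration, invoking Lemma~\ref{L::fondamental} at each step beyond the $n$-th to keep $g$ inside $V$; finally check that the resulting $\p$ is a Poisson morphism (each $e^{v_m}$ is, and the property is preserved under the limit) and that $\p(H)$ has the asserted form modulo $I^2\oplus g^{-1}C^\infty_\a(a)$.

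The reality statement is then almost free: if $H$ is invariant under an anti-holomorphic involution $\s$, one chooses at each step the generator $v_m$ to be $\s$-anti-invariant — possible because the cohomological equation is $\s$-equivariant and the solution with, say, no $\ker\{H_0,-\}$ component is unique, hence real — so every $e^{v_m}$ commutes with $\s$ and so does the limit $\p$; likewise $g$ comes out real. The main obstacle, and the step I expect to require the most care, is the quantitative one: showing that the Arnold-space norms of the remainders contract fast enough along the dyadic scale despite the small divisors from $\Ct(a)$, i.e. that the series $\sum v_m$ converges in $\CM\{q,p\}\tensor g^{-1}C^\infty_\a(a)$. This is exactly the estimate carried out in \cite{Abstract_KAM,arithmetic,Lagrange_KAM}, and the Bruno-type summability condition $\sum\log a_n/2^n>-\infty$ is precisely what makes it work; I would structure the proof so that this estimate is imported as a black box and the genuinely new content is the interplay with $\Ft(H)$ via Lemma~\ref{L::fondamental}.
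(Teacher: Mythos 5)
Your proposal follows essentially the same route as the paper: Martinet's unfolding $G=H+\sum_{i=1}^n t_i\,p_iq_i$, the convergent parametric normal form of \cite{Lagrange_KAM} (Theorem~\ref{T::stable}) imported as a black box, Lemma~\ref{L::fondamental} to keep every step of the iteration tangent to $\Ft(H)$, and reality by equivariance. The one step you leave implicit is how the genuine $C^\infty$ germ $g$ on a full neighbourhood is extracted from a limit morphism that is a priori defined only over the Arnold spaces $C^\infty_\a(a)$: the paper does this by the Whitney extension theorem followed by the implicit function theorem applied to the generators $\p(t_i)$, eliminating the parameter via $t_i=g_i(\l)$ only at the end, rather than absorbing frequency corrections $g^{(m)}$ step by step as you describe.
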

\begin{proof}
We repeat Martinet's trick in our context~\cite{Martinet}.
We define
$$G=H+\sum_{i=1}^n t_i\, p_iq_i \in \CM\{ t,\l,q,p\} $$
and apply the
\begin{theorem}[\cite{Lagrange_KAM}]
\label{T::stable}
Let $a=(a_n)$ be a decreasing positive sequence such that
$$\sum_{n\geq 0} \frac{\log a_n}{2^n}>-\infty. $$  
Take $\a \in \Ct(a) \subset \CM^n$ and consider the morphism of Poisson algebras induced by the inclusion
$\CM\{ \l \} \subset C_{\a}^\infty(a)$:
$$ r:\CM\{t,\l,q,p\} \to \CM\{\l, q,p \} \hat \otimes C_{\a}^\infty(a)  .$$ 
Let  $I \subset \CM\{t,\l,q,p\} $ be the   involutive ideal generated by the
$p_iq_i+\l_i$'s and consider a holomorphic function of the type
$$G=\sum_{i=1}^n (\a_i+t_i) p_iq_i+o(2) \in \CM\{t,\l,q,p\}.$$
There exists a sequence a $1$-bounded morphism $u_\bullet$ such that
\begin{enumerate}[{\rm i)}]
\item  $u_k$ is a polynomial derivation of order $2^{n-1}$ and degree at most $2^n$~;
\item the sequence $\p_k=(e^{u_k}\dots e^{u_0}) $ converges in $\Lt( \CM\{t,\l,q,p\},\CM\{\l, q,p \} \hat \otimes C_{\a}^\infty(a) )$ to a
Poisson morphism ;
\item $\p(G)=\sum_{i=1}^n (\a_i+t_i) p_iq_i\ (\mod(r(I^2) \oplus   \CM\{\l \} \hat \otimes C_{\a}^\infty(a)).$
\end{enumerate}
Moreover if $G$ is real for some antiholomorphic involution then the $u$ can also be chosen real.
\end{theorem}
 We apply the theorem to our situation, we get a Poisson morphism
 $$\p=\lim e^{u_k}\dots e^{u_0} $$
 such that
 $$\p(G)=H_0+\sum_{i=1}^n t_i\, p_iq_i \ \mod(r(I^2) \oplus   \CM\{\l \} \hat \otimes C_{\a}^\infty(a)). $$
 We denote by $J_\bullet \subset E_\bullet$ the ideal of function which vanish on the frequency space.

 \begin{lemma}
 \label{L::restriction} The morphism $\p$ sends $J_0$ to $J_\infty$.
  \end{lemma}
 \begin{proof}
 Define 
 $$\p_k:= e^{u_k}\dots e^{u_0}.$$
By the implicit function theorem the ideal generated by the $\p_k(t_i)$ admits a system of generators of the form (it is possible to apply the theorem
uniformly in $n$ because the sequence is convergent):
$$t_1-g_1^k(\l,),\dots,t_n-g_n^k(\l). $$
 Lemma~\ref{L::fondamental} applied to $\p_k(G)$ shows by induction on $k$ that $u_k$ is tangent to the frequency space of $H$ that is
 $$u_k(J_k) \subset J_k. $$
 This proves the lemma.
 \end{proof}
 The previous lemma shows  that the restriction to $\Ft(H)$ commutes with $\p$:
$$\p(f_{\mid \Ft(H)})=\p(f)_{\mid \Ft(H)} $$
 By the Whitney extension theorem~\cite{Whitney_extension}, the morphism $\p$ is obtained by composition of a  morphism:
$$\Phi:E_0 \to E_0 $$
with the restriction morphism 
$$r:E_0 \to E_\infty.$$ 
As $\Phi(t_i)=t_i+o(2)$ for $i \leq k$, the implicit function theorem implies that the ideal generated by the $\Phi(t_i),\ i=1,\dots,k$ admits a system of generators of the form $$t_1-g_1(\l,),\dots,t_k-g_k(\l)$$
and for any $f \in E_0$, we have
$$\p(f_{\mid t=0})=\p(f)_{\mid t=g(\l)},$$
with $g=(g_1,\dots,g_k,0,\dots,0)$. In particular
$$ \p(H) = H_0+\sum_{i=1}^k g_i(\l) (f_i,e_i) \ \mod(r(I^2) \oplus   g^{-1}C^\infty_{\a}(a)).$$
This proves the theorem.
\end{proof}
Now fix $\tau$ and consider the sequence
$$a_n:=\s(\a_n)2^{-\tau n}.$$
By Theorem~\ref{T::versal}, the real part of the set $g^{-1}(\Ct(a))$ parametrises invariant lagrangian tori (and their degenerations)
 and, for $\tau$ big enough, by the Arithmetic Density Theorem, this set has density one at the origin~\cite{arithmetic}. This proves Theorem~\ref{T::Herman} and answers positively the Herman conjecture. 

 \bibliographystyle{amsplain}
\bibliography{master}
 \end{document}